\theoremstyle{plain}
\newtheorem{thm}{Theorem}[section]
\newtheorem{theorem}[thm]{Theorem}
\newtheorem{lemma}[thm]{Lemma}
\newtheorem{prop}[thm]{Proposition}
\theoremstyle{definition}
\newtheorem{defn}[thm]{Definition}
\newcommand{\Reals}{\mathbb{R}}
\newcommand{\Rats}{\mathbb{Q}}
\newcommand{\Rexp}{\Reals_{\exp}}
\newcommand{\ecl}{\operatorname{ecl}}
\newcommand{\tuple}[1]{\overline{#1}}
\newcommand{\ktup}{\tuple{k}}
\newcommand{\ltup}{\tuple{l}}
\newcommand{\xtup}{\tuple{x}}
\newcommand{\ytup}{\tuple{y}}
\newcommand{\ztup}{\tuple{z}}
\newcommand{\pow}{\lambda}  
\newcommand{\ptup}{\tuple{\pow}}
\newcommand{\Xtup}{\tuple{X}}
\renewcommand{\le}{\leqslant}
\renewcommand{\ge}{\geqslant}
\newcommand{\ld}{\operatorname{ldim}}
\newcommand{\trd}{\operatorname{td}}
\newcommand{\linearlydisjoint}{\bot}
\newcommand{\spanofover}[2]{\left<#1\right>_{#2}}
\title{A Schanuel property for exponentially transcendental powers}
\author{Martin Bays}
\author{Jonathan Kirby}
\author{A.J.~Wilkie}
\subjclass[2000]{11J91, (03C64)}
\begin{document}

\begin{abstract}
 We prove the analogue of Schanuel's conjecture for raising to the power of an exponentially transcendental real number. All but countably many real numbers are exponentially transcendental. We also give a more general result for several powers in a context which encompasses the complex case.
\end{abstract}

\maketitle

\section{Introduction}

We prove a Schanuel property for raising to a real power:
\begin{thm}\label{Real powers}
  Let $\pow\in\Reals$ be exponentially transcendental, let $\ytup\in(\Reals_{>0})^n$,
  and suppose $\ytup$ is multiplicatively independent. Then
  \[ \trd(\ytup,\ytup^\pow/\pow) \ge n. \]
\end{thm}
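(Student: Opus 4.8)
The plan is to deduce this from Ax's differential-field analogue of Schanuel's conjecture, applied to the family of power functions $t\mapsto y_i^t$, together with the o-minimality of $\Rexp$ and the definition of exponential algebraic closure. Write $y_i=e^{c_i}$ with $c_i=\log y_i$; multiplicative independence of $\ytup$ is exactly $\Rats$-linear independence of $\ctup=(c_1,\dots,c_n)$. The strategy has two halves: (i) regard each $y_i^t=e^{tc_i}$ as a germ at $t=\pow$ of a function definable in $\Rexp$ and show, via Ax's theorem, that these germs are algebraically independent over $\Reals(t)$; (ii) use the exponential transcendence of $\pow$ to transfer this independence to the value at $t=\pow$, obtaining $\trd(\ytup,\ytup^\pow/\pow)\ge n$.

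For (i), work in the differential field $\mathcal{M}$ of germs at $\pow^{+}$ of functions definable in $\Rexp$ (a Hardy field), with derivation $d/dt$ and field of constants $\Reals$; since $e^{f}\in\mathcal{M}$ and $(e^{f})'=e^{f}f'$, this is a differential exponential field. Put $u_i=tc_i$ and $v_i=e^{u_i}=y_i^{t}$, so $v_i'/v_i=u_i'$. The $u_i$ are $\Rats$-linearly independent modulo the constants: $\sum q_iu_i=(\sum q_ic_i)\,t$ lies in $\Reals$ only when $\sum q_ic_i=0$, i.e.\ all $q_i=0$. Ax's theorem then yields $\trd_{\Reals}(u_1,\dots,u_n,v_1,\dots,v_n)\ge n+1$, and since $\Reals(u_1,\dots,u_n)=\Reals(t)$ has transcendence degree $1$ over $\Reals$, the $v_i$ — equivalently the germs $y_i^{t}/t$ — are algebraically independent over $\Reals(t)$, a fortiori over $\Reals$.

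For (ii), suppose for contradiction that $\trd(\ytup,\ytup^\pow/\pow)<n$, and let $V\subseteq\mathbb{A}^{2n}$ be the $\Rats$-Zariski closure of $(\ytup,\ytup^\pow/\pow)$, so $\dim V\le n-1$. The fibre $V_{\ytup}=\{\wtup:(\ytup,\wtup)\in V\}$ is a \emph{proper} subvariety of $\mathbb{A}^n$, because $\dim V_{\ytup}=\trd_{\Rats(\ytup)}(\ytup^\pow/\pow)<n-\trd(\ytup)\le n$; by (i) the germ $t\mapsto\ytup^{t}/t$ at $\pow$ does not lie in $V_{\ytup}$, so by o-minimality of $\Rexp$ the point $\pow$ is isolated in the definable set $\{t:(\ytup,\ytup^{t}/t)\in V\}$, whence $\pow\in\operatorname{dcl}^{\Rexp}(\ytup)\subseteq\ecl(\ytup)$. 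This only contradicts $\pow\notin\ecl(\emptyset)$ once the parameters $\ytup$ are removed, and that is the heart of the matter: one deforms $\ytup$ as well — taking $\Rexp$-definable functions $Y_i(\tuple{s})$ over $\Rats$ near a point $\tuple{s}_0$ with $Y_i(\tuple{s}_0)=y_i$, with a transcendence basis of $\ytup$ appearing as free coordinates and the rest as their algebraic continuations, arranged so that $Y_1(\tuple{s}),\dots,Y_n(\tuple{s})$ stay multiplicatively independent as germs — and applies the several-variable form of Ax's theorem to $u_i=t\log Y_i(\tuple{s})$; together with a dimension count this should isolate the whole special point $(\tuple{s}_0,\pow)$ inside a $\Rats$-definable set, yielding $\pow\in\operatorname{dcl}^{\Rexp}(\emptyset)\subseteq\ecl(\emptyset)$ and the required contradiction.

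The main obstacle is precisely this last passage from the functional statement provided by Ax's theorem to the arithmetic statement about the single real number $\pow$, and in particular the elimination of the parameters $\ytup$ so that one lands in $\ecl(\emptyset)$ rather than merely $\ecl(\ytup)$; exponential transcendence of $\pow$ enters here, through the o-minimal fact that isolated points of definable sets lie in the definable — hence exponential-algebraic — closure of the parameters. Making the deformation of $\ytup$ do enough work (the deformed tuple must remain multiplicatively independent, and a suitable Jacobian must have full rank) is the delicate point. The normalisation by $\pow$ in $\ytup^\pow/\pow$ is a convenience matching the statement of the more general several-powers theorem; for $n=1$ it is inessential, and the degenerate case $\pow\log y_1=1$ is handled directly.
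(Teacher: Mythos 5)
Your proposal correctly identifies a natural starting point (Ax's theorem in a Hardy field of germs at $\pow$, plus o-minimality), and the first half is sound: the germs $y_i^t$ are algebraically independent over $\Reals(t)$, and therefore, if the conclusion failed, the point $\pow$ would be isolated in a $\ytup$-definable set, giving $\pow\in\ecl(\ytup)$. But, as you yourself flag (``that is the heart of the matter''; ``the delicate point''), this only contradicts exponential transcendence of $\pow$ once the parameters $\ytup$ have been eliminated to get $\pow\in\ecl(\emptyset)$, and you do not actually carry out that elimination. This is not a small finishing step: it is where essentially all of the content of the theorem sits. The sketch of ``deform $\ytup$ over a $\Rats$-definable family, apply a several-variable Ax, and do a dimension count to isolate $(\tuple{s}_0,\pow)$'' is a gesture, not an argument; you would need to keep the deformed tuple multiplicatively independent as germs, establish that the relevant $\Rats$-definable set really has dimension zero (which cannot be read off from the single hypothesis $\dim V\le n-1$), and control the algebraic dependencies among $y_1,\ldots,y_n$ (so that the free coordinates $\tuple s$ are really free) --- and none of this is done. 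There is also a notational slip: $\trd(\ytup,\ytup^\pow/\pow)$ in the paper means transcendence degree \emph{over} $\Rats(\pow)$, not a tuple of quotients $y_i^\pow/\pow$; your reading leads you to worry about an inessential ``degenerate case $\pow\log y_1=1$'' that plays no role.

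It is worth contrasting with the route the paper actually takes, because it goes around the obstacle you hit rather than through it. Instead of trying to isolate $\pow$ directly, the paper first proves a \emph{relative} Schanuel inequality (Proposition~\ref{fact:genSelfSuff}) over an $\ecl$-closed base $C$, by building $m$ derivations $\partial_i$ from o-minimal definable closure (one for each power $\pow_i$, with $\partial_i\pow_j=\delta_{ij}$ and constants $K_i\supseteq C$) and then invoking Ax with those derivations, so that the parameters are automatically absorbed into $C$. The transfer from that inequality to the statement about powers is then a purely algebraic bookkeeping exercise: the linear-disjointness lemmas (needed because $\Rats(\ptup)$ is linearly disjoint from $C$) and, in the proof of Theorem~\ref{thm:ETpower}, the induction on the decreasing chain $A_{i+1}=A_i\cap\pow^{-1}A_i$, which uses transcendence of $\pow$ to force the chain down to zero. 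This combinatorial linear algebra is exactly the work that your ``dimension count'' would have to replicate, and in the paper's setup it can be done cleanly because one is comparing $\Rats$-linear and $\Rats(\pow)$-linear dimensions rather than trying to trap a single real number inside a $\Rats$-definable set. So: the idea and the first half are fine and in the right spirit, but as submitted the proposal has a genuine unclosed gap at precisely the step that carries all the difficulty.
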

Here and later, $\trd(X/Y)$ denotes the transcendence degree of the field extension $\Rats(X,Y)/\Rats(Y)$ (for $X$, $Y$ subsets of the ambient field, in this case $\Reals$). To say that $\ytup$ is multiplicatively independent means that if $m_1,\ldots,m_n \in \mathbb{Z}$ and $\prod y_i^{m_i} = 1$ then $m_i = 0$ for each $i$. The usual exponential function $\exp:\Reals \to \Reals$ makes the reals into an \emph{exponential field}, formally a field of characteristic zero equipped with a homomorphism from its additive to multiplicative groups. In any exponential field $\langle F;+,\cdot,\exp\rangle$, we say that an element $x \in F$ is \emph{exponentially algebraic} in $F$ iff there is $n \in \mathbb{N}$, $\xtup = (x_1,\ldots,x_n) \in F^n$, and exponential polynomials $f_1,\ldots,f_n \in \mathbb{Z}[\Xtup,e^{\Xtup}]$ such that $x = x_1$, $f_i(\xtup,e^{\xtup}) = 0$ for each $i=1,\ldots,n$, and the determinant of the Jacobian matrix 
\[\begin{pmatrix}
\frac{\partial f_1}{\partial X_1} & \cdots & \frac{\partial f_1}{\partial X_n} \\
\vdots &\ddots & \vdots \\
\frac{\partial f_n}{\partial X_1} & \cdots & \frac{\partial f_n}{\partial X_n} \\
\end{pmatrix}\]
is nonzero at $\xtup$. If $x$ is not exponentially algebraic in $F$ we say it is \emph{exponentially transcendental} in $F$. More generally, for a subset $A$ of $F$, we can define the notion of $x$ being \emph{exponentially algebraic over $A$} with the same definition except that the $f_i$ can have coefficients from $A$. Observe that the non-vanishing of the Jacobian in the reals means that $\xtup$ is an isolated zero of the system of equations, and hence all but countably many real numbers are exponentially transcendental. Thus a consequence of theorem~\ref{Real powers} is that the numbers $\pow, \pow^\pow, \pow^{\pow^2},\pow^{\pow^3},\ldots$ are algebraically independent for all but countably many $\pow$, although, unfortunately, one does not know any explicit $\pow$ for which this is true.

This paper contains a complete proof of theorem~\ref{Real powers}, assuming only some knowlege of o-minimality from the reader (and using a theorem of Ax). The paper \cite{EAEF} of the second author develops the theory of exponential algebraicity in an arbitrary exponential field, and, using that, we can prove a more general theorem.
\begin{theorem}\label{thm:ETpower}
  Let $F$ be any exponential field, let $\pow \in F$ be exponentially transcendental, and let $\xtup \in F^n$ be such that $\exp(\xtup)$ is multiplicatively independent. Then 
\[ \trd(\exp(\xtup),\exp(\pow\xtup)/\pow) \ge n.\]
\end{theorem}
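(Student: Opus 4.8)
The plan is to obtain the bound from Ax's theorem, after transporting the data into a differential field by means of the machinery of \cite{EAEF} --- specifically the facts that $\ecl$ is a pregeometry on every exponential field and that the relation $b\notin\ecl(A)$ is witnessed by a derivation of some exponential field extension of $F$ that is compatible with $\exp$, annihilates $A$, and does not annihilate $b$. First I would note that multiplicative independence of $\exp(\xtup)$ has the consequence that $\xtup$ is linearly independent over $\Rats$: a relation $\sum m_i x_i=0$ with $m_i\in\mathbb{Z}$ not all zero would give $\prod\exp(x_i)^{m_i}=1$. This linear independence is all of the hypothesis that I expect to need, and it is exactly the input Ax's theorem wants.

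The main step is the case $\pow\notin\ecl(\xtup)$. Here I would pass to an exponential field $F'\supseteq F$ with a derivation $D$ satisfying $D(\exp a)=\exp(a)\,Da$, with $\xtup\subseteq\ker D$ and $D\pow\neq0$ --- the two being jointly possible precisely because $\pow$ is exponentially transcendental over $\xtup$. Writing $C=\ker D$ we then have $\xtup,\exp(\xtup)\subseteq C$ and $\pow\notin C$. Next I would check that $\pow x_1,\dots,\pow x_n$ are $\Rats$-linearly independent modulo $C$: from $\sum q_i\pow x_i\in C$ one gets $\pow\cdot\sum q_i x_i\in C$, and $\sum q_i x_i\in C$ is nonzero unless all $q_i=0$ by the linear independence of $\xtup$, forcing $\pow\in C$ otherwise. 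Since moreover $D(\pow x_i)=x_i\,D\pow$ and $D(\exp(\pow x_i))=\exp(\pow x_i)\,D(\pow x_i)$, Ax's theorem applied to $(\pow x_1,\dots,\pow x_n)$ and its exponential gives
\[ \trd_C\bigl(\pow x_1,\dots,\pow x_n,\exp(\pow x_1),\dots,\exp(\pow x_n)\bigr)\ge n+1. \]
Because $\xtup\subseteq C$ forces $\pow x_i\in C(\pow)$, the left side equals $1+\trd_{C(\pow)}(\exp(\pow\xtup))$, so $\trd_{C(\pow)}(\exp(\pow\xtup))\ge n$; and because $\exp(\xtup)\subseteq C$ we have $\Rats(\pow,\exp(\xtup))\subseteq C(\pow)$, whence
\[ \trd(\exp(\xtup),\exp(\pow\xtup)/\pow)\ \ge\ \trd_{C(\pow)}(\exp(\pow\xtup))\ \ge\ n. \]

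The remaining case, $\pow\in\ecl(\xtup)$ but $\pow\notin\ecl(\emptyset)$, is where I expect the real work to be. Now $\xtup\subseteq\ker D$ forces $D\pow=0$, so no derivation of the above kind exists, and the argument must be restructured: I would aim to split the transcendence degree into a contribution from $\exp(\xtup)$ itself, controlled by the $\ecl$-dimension of $\xtup$ over $\pow$ via the fact (drawn from \cite{EAEF}) that $b\notin\ecl(A)$ implies $\exp(b)$ is transcendental over $\Rats(A,\exp(A))$, and a contribution from $\exp(\pow\xtup)$, controlled by an instance of the derivation-plus-Ax argument relative to a well-chosen constant field; and to combine these, together with an induction on $n$ exploiting exchange in $\ecl$, to recover the bound $n$. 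Getting this decomposition to balance --- choosing which coordinates to strip off, and which constant field to differentiate over, so that nothing is lost --- is the crux, and I expect it to be the main obstacle. Everything else (the pregeometry axioms, the existence of $\exp$-compatible derivations detecting $\ecl$, and the use of Ax's theorem once a suitable derivation is in hand) is either supplied by \cite{EAEF} or routine.
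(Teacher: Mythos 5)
Your first case, $\pow\notin\ecl(\xtup)$, is correct and is the straightforward part: once one has, from \cite{EAEF}, an exponential derivation $D$ on an extension of $F$ with $\xtup\subseteq\ker D=C$ and $D\pow\neq0$, the check that $\pow x_1,\dots,\pow x_n$ are $\Rats$-linearly independent modulo $C$ is right, and Ax with a single derivation gives $\trd_C(\pow\xtup,\exp(\pow\xtup))\ge n+1$, from which the bound follows exactly as you say. But your second case, $\pow\in\ecl(\xtup)\setminus\ecl(\emptyset)$, is not a residual detail; it is where the content of the theorem lives, and your proposal only gestures at a plan (split $\trd$, induct on $n$, use exchange) without producing an argument. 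This case is certainly nonvacuous --- take $x_1=\pow$, for instance, or more generally any $\xtup$ with a coordinate interalgebraic with $\pow$ --- and the derivation you used in the first case simply does not exist here, since $\xtup\subseteq\ker D$ would force $D\pow=0$. You have correctly located the crux of the problem but not resolved it, so this is a genuine gap.

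For comparison: the paper avoids your case split entirely by proving the stronger Theorem~\ref{thm:PowersSC} first. There the constant field $C$ is taken to be $\ecl(B)$ for $B$ a basis complementary to $\pow$, so $\pow\notin C$ automatically, but one pays by having no control over whether $\xtup$ lies in $C$. The resulting Schanuel-type inequality (Proposition~\ref{fact:genSelfSuff}, via Ax with $m$ derivations) then has to be massaged: the two cheap estimates (\ref{e2}) and (\ref{e3}) reduce everything to the single purely linear-algebraic inequality $\ld_\Rats(\pow\xtup/\xtup,\ker)\ge\ld_{\Rats(\pow)}(\xtup/\ker)$, and \emph{that} is the crux. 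The paper proves it by the descending chain $A_{i+1}=A_i\cap\pow^{-1}A_i$, using transcendence of $\pow$ to force $A_N=\{0\}$ and then telescoping $\ld_\Rats(A_i/A_{i+1})$ against $\ld_{\Rats(\pow)}(A_i/A_{i+1})$. In effect the paper replaces your anticipated induction-with-exchange by this concrete finite filtration, and the interplay between $\Rats$- and $\Rats(\pow)$-dimensions is handled by linear disjointness (Lemmas~\ref{lemma:LDElem} and~\ref{lemma:LDIneq}) rather than by choosing a fresh derivation for each stripped-off coordinate. If you want to complete your approach, the $A_i$ lemma is the tool you are missing; without it or an equivalent, the second case does not close.
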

Theorem~\ref{Real powers} follows from \ref{thm:ETpower} by taking $x_i = \log y_i$.

We define the exponential algebraic closure $\ecl(A)$ of a subset $A$ of $F$ to be the set of $x \in F$ which are exponentially algebraic over $A$. In \cite{EAEF} it is shown that $\ecl$ is a pregeometry in any exponential field, and hence we have notions of dimension and independence. We also prove a general Schanuel property for raising to several independent powers, which uses a slightly subtle notion of relative linear dimension. For any subfield $K$ of $F$, we can think of $F$ as a $K$-vector space.
For subsets $X$, $Y$ of $F$, consider the $K$-linear subspaces $\spanofover{XY}{K}$ and $\spanofover{Y}{K}$ of $F$ generated by $X \cup Y$ and $Y$ respectively. We define $\ld_K(X/Y)$ to be the $K$-linear dimension of the quotient $K$-vector space $\spanofover{XY}{K}/\spanofover{Y}{K}$.
\begin{thm}\label{thm:PowersSC}
  Let $F$ be any exponential field, let $\ker$ be the kernel of its exponential map, let $C$ be an $\ecl$-closed subfield of $F$, and let $\ptup$ be an $m$-tuple which is exponentially algebraically independent over $C$. Then for any tuple $\ztup$ from $F$:
  \[\trd(\exp(\ztup)/C,\ptup) + \ld_{\Rats(\ptup)}(\ztup/\ker) -
  \ld_{\Rats}(\ztup/\ker) \ge 0 .\]
\end{thm}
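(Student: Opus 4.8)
The plan is to reduce the statement to an application of Ax's theorem, with the exponential derivations furnished by the structure theory of \cite{EAEF}. Write $\delta$ for the left‑hand side. Since $\exp(\kappa)=1$ satisfies the Jacobian condition for $\kappa\in\ker$, we have $\ker\subseteq\ecl(\emptyset)\subseteq C$. Hence removing from $\ztup$ any element lying in $\spanofover{\ztup\,\ker}{\Rats}$ changes none of the three terms: the transcendence degree is unaffected because the discarded exponential is algebraic over $\Rats(\exp(\ztup))$, and the two linear dimensions are unaffected because $\spanofover{\ztup\,\ker}{\Rats}\subseteq\spanofover{\ztup\,\ker}{\Rats(\ptup)}$. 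So we may assume $\ztup=(z_1,\dots,z_n)$ is $\Rats$‑linearly independent modulo $\ker$, i.e.\ $\ld_\Rats(\ztup/\ker)=n$. Put $r:=\ld_{\Rats(\ptup)}(\ztup/\ker)$; if $r=n$ the inequality is trivial, so assume $r<n$ and reorder so that $z_1,\dots,z_r$ form a $\Rats(\ptup)$‑basis of $\spanofover{\ztup\,\ker}{\Rats(\ptup)}$ modulo $\spanofover{\ker}{\Rats(\ptup)}$; then for each $j>r$,
\[ z_j=\sum_{i=1}^{r}\mu_{ji}(\ptup)\,z_i+\sum_{l}\nu_{jl}(\ptup)\,\kappa_l,\qquad \mu_{ji},\nu_{jl}\in\Rats(\ptup),\ \kappa_l\in\ker, \]
where the $\kappa_l$ are chosen to be a $\Rats(\ptup)$‑basis of the span of the finitely many kernel elements involved. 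It now suffices to prove $\trd(\exp(z_{r+1}),\dots,\exp(z_n)/C,\ptup)\ge n-r$, since then $\delta=\trd(\exp(\ztup)/C,\ptup)+r-n\ge (n-r)+r-n=0$.

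Since $\ptup$ is exponentially algebraically independent over the $\ecl$‑closed field $C$, the theory of \cite{EAEF} supplies exponential derivations $\partial_1,\dots,\partial_m$ of $F$ (into $F$, after passing to an extension if necessary) with $\partial_k|_C=0$, $\partial_k(\exp x)=\exp(x)\partial_k x$, $\partial_k\pow_l=\delta_{kl}$, and — taking $z_1,\dots,z_r$ exponentially algebraically independent over $C\cup\ptup$ — also $\partial_k z_i=0$ for $i\le r$. As $\exp(\kappa_l)=1$ forces $\partial_k\kappa_l=0$, differentiating the relations above gives, for $j>r$, $\partial_k z_j=\sum_i(\partial\mu_{ji}/\partial\pow_k)(\ptup)\,z_i+\sum_l(\partial\nu_{jl}/\partial\pow_k)(\ptup)\,\kappa_l$. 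I claim $\pow_1,\dots,\pow_m,z_{r+1},\dots,z_n$ are $\Rats$‑linearly independent modulo the common constant field $C_0:=\bigcap_k\ker\partial_k\supseteq C$: if $\sum_l q_l\pow_l+\sum_{j>r}p_jz_j\in C_0$, then applying each $\partial_k$ and using the $\Rats(\ptup)$‑linear independence of $z_1,\dots,z_r,\kappa_1,\dots$ shows that each $\sum_j p_j\mu_{ji}(\ptup)$ and $\sum_j p_j\nu_{jl}(\ptup)$ has vanishing partials in all the $\pow_k$, hence lies in $\Rats$ as $\Rats(\ptup)$ is purely transcendental over $\Rats$; substituting back, $\sum_{j>r}p_jz_j\in\spanofover{z_1,\dots,z_r,\ker}{\Rats}$, which by $\Rats$‑linear independence of $\ztup$ modulo $\ker$ forces all $p_j=0$, and then all $q_l=0$.

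Now apply Ax's theorem to $(\pow_1,\dots,\pow_m,z_{r+1},\dots,z_n)$ together with its exponential and the derivations $\partial_1,\dots,\partial_m$: the $m+(n-r)$ listed elements are $\Rats$‑linearly independent modulo $C_0$, and the derivation matrix $(\partial_k x)$ contains the $m\times m$ identity in its $\pow$‑columns, hence has rank exactly $m$, so Ax yields that the transcendence degree of the whole configuration over $C_0$ is at least $\bigl(m+(n-r)\bigr)+m=2m+(n-r)$. Since the $z_j$ $(j>r)$ lie in $C_0(\ptup)$, we have $\trd(\pow_1,\dots,\pow_m,z_{r+1},\dots,z_n/C_0)=m$ — here using that $\ptup$ is algebraically independent over $C_0$ — and $\trd(\exp(\ptup)/C_0(\ptup))\le m$; subtracting these two contributions, and noting $C_0(\ptup,\exp(\ptup))\supseteq C(\ptup)$, leaves $\trd(\exp(z_{r+1}),\dots,\exp(z_n)/C(\ptup))\ge 2m+(n-r)-m-m=n-r$, which is what was needed.

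The delicate points lie in the middle steps and concern degeneracies suppressed above, and this is where I expect the real work to be. First, the constant field $C_0$ is genuinely larger than $C$ in general, and one must arrange the derivations so that $\ptup$ remains algebraically independent over it (possibly by passing to a sub‑tuple of $\ptup$ or replacing $F$ by a suitable extension that preserves the relevant transcendence degrees, the kernel intersections, and the exponential algebraic independence of $\ptup$ over $C$). Second, when some of $z_1,\dots,z_r$ are only exponentially \emph{algebraic} (though perhaps field‑transcendental) over $C\cup\ptup$, one cannot take their $\partial_k$‑values to vanish, and the tuple fed to Ax has to be enlarged to absorb those $z_i$ together with witnesses to their exponential algebraicity, with a correspondingly more involved linear‑algebra step. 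Finally, the interaction of $\ker$ with the coefficients $\nu_{jl}\in\Rats(\ptup)$ is real — it is precisely what forces the choice of the $\kappa_l$ as a $\Rats(\ptup)$‑basis — and dealing with it, together with a handful of boundary cases (such as $1$ or a monomial in $\ptup$ falling into the relevant span), absorbs most of the effort; by contrast, the reductions and the invocation of Ax itself are routine.
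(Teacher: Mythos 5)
Your route is genuinely different from the paper's: you try to feed a hand-picked tuple $(\ptup,z_{r+1},\dots,z_n)$ directly into Ax's theorem with the exponential derivations from \cite{EAEF}, whereas the paper first proves the self-contained Proposition~\ref{fact:genSelfSuff} (Schanuel over $C$ for the full tuple $(\ptup,\ztup)$), then does all the bookkeeping with the addition formula for $\trd$ and $\ld$ together with the linear-disjointness inequality (Lemma~\ref{lemma:LDIneq}) to change the base from $C$ to $\ker$. Your preliminary reduction to $\ztup$ being $\Rats$-linearly independent modulo $\ker$, and the final arithmetic subtracting $\trd(\ptup,z_{>r}/C_0)=m$ and $\trd(\exp(\ptup)/C_0(\ptup))\le m$ from the Ax lower bound, are sound as far as they go.

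However, there is a genuine gap, which you flag but do not close, and it is not a boundary case: you assume $\partial_k z_i=0$ for $i\le r$ by ``taking $z_1,\dots,z_r$ exponentially algebraically independent over $C\cup\ptup$.'' You have no freedom to take them so; $z_1,\dots,z_r$ are whatever elements of the given $\ztup$ happen to form a $\Rats(\ptup)$-basis modulo $\ker$, and generically they will be $\ecl$-dependent on $C\cup\ptup$ (they could even lie in $C$). The derivations of \cite{EAEF} have constant fields $K_k=\ecl(B\cup\ptup\smallsetminus\pow_k)$ with $\bigcap_k K_k=C$; there is no basis choice that simultaneously annihilates arbitrary $z_i$ without also killing some $\pow_k$. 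Once $\partial_k z_i\neq 0$, your computation of $\partial_k z_j$ for $j>r$ picks up the extra terms $\mu_{ji}(\ptup)\,\partial_k z_i$, the rank count for the derivation matrix no longer reduces to the $\pow$-block, the elements $z_{>r}$ no longer lie in $C_0(\ptup)$, and the $\Rats$-linear-independence-over-$C_0$ claim (which already has a second, smaller gap in the possible relation $1\in\spanofover{z_1,\dots,z_r,\ktup}{\Rats(\ptup)}$) breaks. Repairing this appears to require enlarging the tuple to absorb $z_1,\dots,z_r$ and redoing the count, at which point you are essentially re-deriving Proposition~\ref{fact:genSelfSuff}; the paper's factoring of the argument into that proposition plus Lemma~\ref{lemma:LDIneq} is precisely what sidesteps the need to control $\partial_k z_i$ at all.
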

The reader who is interested only in the real case may ignore all the references to \cite{EAEF}. On the other hand, the reader who is unfamiliar with o-minimality may prefer to ignore that part of this paper and instead refer to the algebraic proof of proposition~\ref{fact:genSelfSuff} in \cite{EAEF}.

\section{A Schanuel property for exponentiation}

We need the following relative Schanuel property for exponentiation itself.
\begin{prop}\label{fact:genSelfSuff}
  Let $F$ be an exponential field and let $\ptup\in F^m$ be exponentially algebraically independent. Let $B \subseteq F$ be such that $B \cup \ptup$ is a basis for $F$ with respect to the pregeometry $\ecl$. Let $C = \ecl(B)$. Then for any $\ztup\in F^n$,
  \[ \trd(\ptup,\ztup,\exp(\ptup),\exp(\ztup)/C) - \ld_\Rats(\ptup,\ztup/C) \ge m.\]
\end{prop}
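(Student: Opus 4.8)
The inequality is a relative version of Ax's theorem: $\trd(\,\cdot\,/C)$ will play the role of a transcendence degree over a field of constants, and the ``$+m$'' will come from $m$ commuting derivations for which $\ptup$ behaves like a tuple of independent variables. The plan has three steps: (1) realize the data $C,\ptup,\ztup,\exp(\ptup),\exp(\ztup)$ inside a differential field equipped with $m$ commuting derivations $D_1,\dots,D_m$ that are compatible with $\exp$ (that is, $D_i\exp(a)=\exp(a)D_ia$), that satisfy $D_ic=0$ for $c\in C$ and $D_ip_j=\delta_{ij}$, and whose presence changes no transcendence degree over $C$ nor $\Rats$-linear dimension over $C$ of tuples built from this data; (2) check that the common field of constants of this differential field is exactly $C$; (3) apply Ax's theorem.

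Granting (1) and (2), step (3) finishes it. Ax's theorem applied to the tuples $(\ptup,\ztup)$ and $(\exp\ptup,\exp\ztup)$, which satisfy $D_i\exp(w)=\exp(w)D_iw$ for each relevant entry $w$, gives
\[ \trd\bigl(\ptup,\ztup,\exp\ptup,\exp\ztup/C\bigr)\ \ge\ \ld_\Rats\bigl((\ptup,\ztup)/C\bigr)+\operatorname{rank}\bigl((D_iw_k)_{i,k}\bigr),\]
where $w_1,\dots,w_{m+n}$ list the entries of $(\ptup,\ztup)$ and $1\le i\le m$; since the submatrix $(D_ip_j)_{i,j\le m}$ is the identity, the rank is $m$, and rearranging yields the proposition. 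So all the content is in (1) and (2).

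For $F=\Reals$ with the usual exponential — the case needed for Theorem \ref{Real powers} — I would obtain (1) and (2) from the o-minimality of $\Reals_{\exp}$. Since $B\cup\ptup$ is an $\ecl$-basis of $\Reals$, every entry of $\ztup$ lies in $\ecl(C\cup\ptup)$; a nonsingular Jacobian forces an isolated zero of the defining system, and an isolated point of a set definable over $C\cup\ptup$ lies in $\operatorname{dcl}_{\Reals_{\exp}}(C\cup\ptup)$, so $\ztup=h(\ptup)$ for some $\Reals_{\exp}$-definable-over-$C$ function $h$. Also $\ptup$ is $\ecl$-independent over $C=\ecl(B)$, and since $\operatorname{dcl}_{\Reals_{\exp}}$ and $\ecl$ agree on subsets of $\Reals$ (a consequence of the o-minimality and model-completeness of $\Reals_{\exp}$, see \cite{EAEF}), $\ptup$ is $\operatorname{dcl}$-generic over $C$, i.e.\ $\dim(\ptup/C)=m$; by cell decomposition together with Wilkie's structure theorem, $h$ is then real-analytic on a definably connected $C$-definable open $V\ni\ptup$. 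Let $\mathcal O$ be the local domain of germs at $\ptup$ of $C$-definable real-analytic functions, $\mathcal K=\operatorname{Frac}(\mathcal O)$, and $D_i=\partial/\partial x_i$ the coordinate derivations; the germs of the coordinate projections, of $h$, and of the exponentials of these generate inside $\mathcal K$ a copy of $C(\ptup,\ztup,\exp\ptup,\exp\ztup)$ on which the $D_i$ satisfy everything demanded in (1). Genericity of $\ptup$ over $C$ gives the ``no change'' clause: a polynomial relation over $C$ among $(\ptup,\ztup,\exp\ptup,\exp\ztup)$ that holds at $\ptup$ holds identically near $\ptup$ — otherwise its zero set would be a $C$-definable subset of $V$ of dimension $<m$ through $\ptup$, contradicting $\dim(\ptup/C)=m$ — so evaluation at $\ptup$ preserves transcendence degree over $C$ and $\Rats$-linear dimension over $C$. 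Finally the constants of $(\mathcal K,D_1,\dots,D_m)$ are germs of locally constant, hence constant, $C$-definable functions, whose values lie in $\operatorname{dcl}_{\Reals_{\exp}}(C)=\ecl(C)=C$; this is (2).

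I expect the main obstacle to be precisely (2) together with the genericity-descent inside (1): both rest on $\operatorname{dcl}_{\Reals_{\exp}}=\ecl$, which is the genuine input from Wilkie's work beyond bare o-minimality — without it one cannot rule out spurious constants (which would spoil both the transcendence-degree and the $\Rats$-linear-dimension comparisons) nor conclude that $\ptup$ is analytically generic over $C$. For an arbitrary exponential field $F$ there is no topology to run this argument; there one instead constructs the derivations $D_i$ purely algebraically from the hypotheses that $\ptup$ is $\ecl$-independent over $C$ and $B\cup\ptup$ is an $\ecl$-basis, as carried out in \cite{EAEF}, and then invokes Ax's theorem exactly as in step (3).
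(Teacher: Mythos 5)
Your proposal is correct, and it follows the same essential strategy as the paper: build $m$ commuting derivations compatible with $\exp$, killing $C$, and dual to $\ptup$ in the sense $D_i p_j=\delta_{ij}$, then feed them into Ax's theorem. The packaging differs in one respect worth noting. The paper constructs the derivations $\partial_i$ \emph{directly on $\Reals$}: for each $i$ it sets $K_i=\ecl(B\cup\ptup\smallsetminus\pow_i)$, represents an arbitrary $a\in\Reals$ as $\theta(\pow_i)$ for a one-variable $K_i$-definable $\theta$, and defines $\partial_i a=\theta'(\pow_i)$; well-definedness and differentiability come from o-minimality alone (the exceptional sets are finite and $K_i$-definable, hence in $K_i$), the constants of $\partial_i$ are $K_i$, and $\bigcap_i K_i=C$. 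Ax's theorem is then applied in $\Reals$ itself. You instead work in the fraction field of germs at $\ptup$ of $C$-definable analytic functions with the coordinate partial derivatives $D_i=\partial/\partial x_i$, and must then transfer the Ax inequality back to $\Reals$ via the genericity-descent argument (evaluation at the $\operatorname{dcl}$-generic point $\ptup$ preserves $\trd$ and $\ld_\Rats$ over $C$). That descent step is sound --- your dimension argument for it is the right one --- but it is an extra step the paper's construction avoids, and it also needs the analyticity input (Wilkie's model completeness) rather than mere almost-everywhere differentiability. Note in passing that your two derivations agree numerically: with $\theta(x)=h_j(\pow_1,\ldots,\pow_{i-1},x,\pow_{i+1},\ldots,\pow_m)$ one has $\theta'(\pow_i)=\frac{\partial h_j}{\partial x_i}(\ptup)$, so the paper's $\partial_i$ is exactly ``your $D_i$ followed by evaluation at $\ptup$.'' Both routes hinge on the same nontrivial input, $\operatorname{dcl}_{\Rexp}=\ecl$ (theorem~4.2 of \cite{JW98}), which you correctly flag as the crux, and both invoke Ax identically at the end.
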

\begin{proof}
  Theorem~1.2 of \cite{EAEF} states that 
  $\trd(\ptup,\ztup,\exp(\ptup),\exp(\ztup)/C) - \ld_\Rats(\ptup,\ztup/C)$ is at least the dimension of the $(m+n)$-tuple $(\ptup,\ztup)$ over $C$ with respect to the pregeometry $\ecl$. Since $\ptup$ is $\ecl$-independent over $C$ by assumption, this dimension is at least $m$.
\end{proof}

We give a more direct proof of proposition~\ref{fact:genSelfSuff} in the real case. Firstly, by theorem~4.2 of \cite{JW98}, a real number $x$ is in the exponential algebraic closure $\ecl(A)$ of a subset $A$ of $\Reals$ iff it lies in the definable closure of $A$ in the structure $\Rexp = \langle\Reals;+,\cdot,\exp\rangle$. Definable closure is always a pregeometry in an o-minimal field, so $\ecl$ is a pregeometry on $\Rexp$.

For each $i = 1,\ldots,m$, let $K_i = \ecl(B \cup \ptup \smallsetminus \pow_i)$, so $C = \bigcap_{i=1}^m K_i$. Then for each $i$, $\pow_i \notin K_i$, but for each $a \in \Reals$ there is a function $\theta:\Reals \to \Reals$, definable in $\Rexp$ with parameters from $K_i$, such that $\theta(\pow_i)=a$. By o-minimality of $\Rexp$, $\theta$ is differentiable at all but finitely many $x \in \Reals$, and hence this exceptional set is contained in $K_i$. Thus $\theta$ is differentiable on an open interval containing $\pow_i$. Suppose that $\psi:\Reals \to \Reals$ is another such function with $\psi(\pow_i)=a$. Again by o-minimality, the boundary of the set $\{x \in \Reals \mid\psi(x) = \theta(x)\}$ is finite and contained in $K_i$, so $\theta$ and $\psi$ agree on an open interval containing $\pow_i$. It follows that there is a well-defined function $\partial_i:\Reals \to \Reals$ which sends $a$ to $\frac{d \theta}{dx}(\pow_i)$, where $\theta$ is any function definable in $\Rexp$ with parameters from $K_i$ such that $\theta(\pow_i)=a$. It is straightforward to check that $\partial_i$ is a derivation on the field $\Reals$, with field of constants $K_i$. Furthermore, we also clearly have that $\partial_i(\exp(a)) = \partial_i(a) \exp(a)$ for any $a\in \Reals$, and that $\partial_i(p_j) = \delta_{ij}$, the Kronecker delta.

By Ax's theorem \cite[theorem~3]{Ax71}, 
$\trd(\ptup,\ztup,\exp(\ptup),\exp(\ztup)/C) - \ld_\Rats(\ptup,\ztup/C)$ is at least the rank of the matrix
\[\begin{pmatrix}\partial_1 z_1 & \cdots & \partial_1 z_n & \partial_1 \pow_1 & \cdots & \partial_1 \pow_m\\
\vdots & & \vdots & \vdots & & \vdots\\ 
\partial_m z_1 & \cdots & \partial_m z_n & \partial_m \pow_1 & \cdots & \partial_m \pow_m\end{pmatrix}\]
which is $m$ since the right half is just the $m \times m$ identity matrix. That completes the proof of proposition~\ref{fact:genSelfSuff} in the real case. The general case works the same way, but a different and much more involved argument is used in \cite{EAEF} to produce the derivations $\partial_i$ without using o-minimality.

\section{Linear disjointness}

The other key ingredient in the proofs is the concept of linear disjointness. We briefly recall the definition and some basic properties.

\begin{defn} Let $F$ be a field, and let $K$, $L$, and $E$ be subfields of $F$ with $E \subseteq K \cap L$. Then $K$ is \emph{linearly disjoint from $L$ over $E$}, written $K\linearlydisjoint_E L$, iff every tuple $\ktup$ of elements of $K$ that is $E$-linearly independent is also $L$-linearly independent.
\end{defn}

\begin{lemma}\label{lemma:LDElem}  \mbox{ }
  \begin{enumerate}
    \item[(i)] $K\linearlydisjoint_E L$ iff
      $L\linearlydisjoint_E K$
    \item[(ii)] $K\linearlydisjoint_E L$ iff for any tuple
      $\ltup$ from $L$, $\ld_K(\ltup) = \ld_E(\ltup)$
    \item[(iii)] If $\ktup$ is algebraically independent
      over $L$, then $E(\ktup)\linearlydisjoint_E L$.
  \end{enumerate}
\end{lemma}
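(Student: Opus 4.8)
The plan is to isolate a reformulation of linear disjointness that is visibly symmetric, obtain (i) from it at once, and then read off (ii) and (iii).

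First I would prove that $K\linearlydisjoint_E L$ holds if and only if, for every $E$-linearly independent tuple $\ktup$ from $K$ and every $E$-linearly independent tuple $\ltup$ from $L$, the family of products $(k_i l_j)_{i,j}$ is $E$-linearly independent. For the forward direction, an $E$-linear dependence $\sum_{i,j} e_{ij} k_i l_j = 0$ regroups as $\sum_i k_i\bigl(\sum_j e_{ij} l_j\bigr) = 0$; since the $k_i$ are $E$-linearly independent in $K$ they are $L$-linearly independent, so each $\sum_j e_{ij} l_j$ vanishes, and then $E$-linear independence of $\ltup$ forces all $e_{ij} = 0$. For the converse, given a relation $\sum_i k_i a_i = 0$ with $a_i \in L$ and the $k_i$ an $E$-linearly independent tuple from $K$, I would expand each $a_i$ over a fixed $E$-basis $\ltup$ of $\spanofover{a_1,\ldots,a_n}{E} \subseteq L$; substituting and regrouping rewrites the relation as an $E$-linear dependence among the products $k_i l_j$, so all coefficients, hence all $a_i$, vanish. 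The reformulated condition treats $K$ and $L$ symmetrically, so (i) is immediate.

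For (ii), I would first note that $\ld_K(\ltup) \le \ld_E(\ltup)$ for any tuple $\ltup$, since a maximal $E$-linearly independent subtuple already $K$-spans $\ltup$. If $K\linearlydisjoint_E L$, that subtuple has $\ld_E(\ltup)$ entries and is an $E$-linearly independent tuple from $L$, hence is $K$-linearly independent by (i), giving the reverse inequality and so equality. Conversely, if $\ld_K(\ltup) = \ld_E(\ltup)$ for every tuple $\ltup$ from $L$, then any $E$-linearly independent $\ltup$ from $L$ satisfies $\ld_K(\ltup) = \ld_E(\ltup) = |\ltup|$, i.e.\ is $K$-linearly independent; this is $L\linearlydisjoint_E K$, hence $K\linearlydisjoint_E L$ by (i). For (iii), I would check directly that an $E$-linearly independent tuple $\xtup = (x_1,\ldots,x_n)$ from $E(\ktup)$ is $L$-linearly independent. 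Clearing a common denominator — replacing $\xtup$ by $q\xtup$ for a suitable nonzero $q \in E[\ktup]$, which, as $q$ is a nonzero element of the domain, alters neither the $E$-linear nor the $L$-linear dependences among the coordinates — reduces to the case $\xtup \in E[\ktup]^n$. Then all the $x_i$ lie in the $E$-span of some finite set $S$ of monomials in $\ktup$, and $S$ is $L$-linearly independent because $\ktup$ is algebraically independent over $L$. Writing $x_i = \sum_{s \in S} c_{is}\, s$ with $c_{is} \in E$, the matrix $C = (c_{is})$ has rank $n$ over $E$ (the $x_i$ are $E$-independent, the elements of $S$ are $E$-independent), and since its entries lie in $E$ its rank over $L$ is still $n$; a relation $\sum_i l_i x_i = 0$ over $L$ then gives $\sum_i l_i c_{is} = 0$ for each $s \in S$, i.e.\ $(l_i)_i$ lies in the left kernel of $C$ over $L$, which is trivial, so all $l_i = 0$.

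The only steps carrying real content are the converse direction of the symmetric reformulation (the change-of-basis trick) and its analogue in (iii); the rest is bookkeeping with the definition. The point in (iii) not to skip is that a matrix with entries in $E$ has the same rank over $E$ as over $L$, which is precisely where algebraic independence of $\ktup$ over $L$ is used.
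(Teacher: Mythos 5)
Your proof is correct and complete. Note that the paper itself gives essentially no argument here: it declares (i) and (ii) ``straightforward'' and simply cites Lang VIII.3.3 for (iii), so you are supplying details the paper elides rather than diverging from it. Your symmetric reformulation --- $K\linearlydisjoint_E L$ iff products $k_i l_j$ of $E$-independent tuples from $K$ and $L$ remain $E$-independent --- is the standard ``tensor-product'' criterion, and it is indeed the cleanest way to get (i) for free and to read off (ii) via the $\ld_K(\ltup)\le\ld_E(\ltup)$ observation. For (iii), the reduction to $E[\ktup]$ by clearing denominators (harmless since multiplication by a nonzero field element preserves linear relations), followed by the observation that the monomials in $\ktup$ are $L$-independent and that a matrix with entries in $E$ has the same rank over $E$ as over the extension $L$, is exactly the content of Lang's proposition; you have correctly isolated that the rank-invariance step is where algebraic independence over $L$ (as opposed to over $E$) enters. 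One small remark: in your parenthetical justification that $C$ has rank $n$ over $E$, only the $E$-independence of $\xtup$ is actually needed to conclude the rows of $C$ are $E$-independent ($E$-independence of $S$ is needed for the converse implication, which you do not use); this is harmless but worth being precise about.
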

\begin{proof}
      (i) and (ii) are straightforward; (iii) is proposition~VIII~3.3 of \cite{Lang93}.
\end{proof}

\begin{lemma}\label{lemma:LDIneq}
  Suppose $K\linearlydisjoint_E L$. Then for any tuple $\xtup$ from $F$ and any subset $A\subseteq L$,
  \[ \ld_K(\xtup/L) - \ld_E(\xtup/L) \le \ld_K(\xtup/A) - \ld_E(\xtup/A). \]
\end{lemma}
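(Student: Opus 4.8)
The plan is to fix the (finite) tuple $\xtup$, to reduce the inequality by a directed-limit argument to the case where $A$ is replaced by $A\cup\{b\}$ for a single $b\in L$, and then to settle that one-step version using the elementary additivity of linear dimension together with the definition of linear disjointness. Throughout, write $g(S):=\ld_K(\xtup/S)-\ld_E(\xtup/S)$ for $S\subseteq F$, so that the goal becomes $g(A)\ge g(L)$.

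First I would record the additivity of $\ld$: for any subfield $\Phi$ of $F$ containing $E$, any finite tuples $\tuple{u},\tuple{v}$, and any $S\subseteq F$,
\[ \ld_\Phi(\tuple{u}\,\tuple{v}/S)=\ld_\Phi(\tuple{u}/S)+\ld_\Phi(\tuple{v}/S\tuple{u}),\]
which is immediate from the second isomorphism theorem applied to the chain of $\Phi$-spans $\spanofover{S}{\Phi}\subseteq\spanofover{\tuple{u}S}{\Phi}\subseteq\spanofover{\tuple{u}\tuple{v}S}{\Phi}$, and is valid for infinite $S$ since $\spanofover{\tuple{u}\tuple{v}S}{\Phi}/\spanofover{S}{\Phi}$ is finite-dimensional. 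Computing $\ld_\Phi(\xtup b/S)$ in the two orders and comparing yields the key identity
\[ \ld_\Phi(\xtup/S)-\ld_\Phi(\xtup/Sb)=\ld_\Phi(b/S)-\ld_\Phi(b/\xtup S).\]

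For the reduction I would use that, since $\xtup$ is finite, $\ld_K(\xtup/S)$ and $\ld_E(\xtup/S)$ are bounded non-negative integers that do not increase as $S$ grows; a routine argument then produces a finite $A_0\subseteq A$ with $g(A_0)=g(A)$, and a finite $L_0$ with $A_0\subseteq L_0\subseteq L$ and $g(L_0)=g(L)$. Enumerating $L_0\smallsetminus A_0$, it therefore suffices to prove $g(S)\ge g(S\cup\{b\})$ whenever $S\subseteq L$ and $b\in L$. By the key identity,
\[ g(S)-g(S\cup\{b\})=\bigl(\ld_K(b/S)-\ld_E(b/S)\bigr)-\bigl(\ld_K(b/\xtup S)-\ld_E(b/\xtup S)\bigr).\]
Each term $\ld_\Phi(b/\,\cdot\,)$ is $0$ or $1$; since $E\subseteq K$ we have $\ld_K(b/\xtup S)\le\ld_E(b/\xtup S)$, so the second parenthesis is $\le 0$. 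The first parenthesis is $\ge 0$: this is the one place $K\linearlydisjoint_E L$ is used, to show $b\in\spanofover{S}{K}\Rightarrow b\in\spanofover{S}{E}$. Writing $b=\sum_i k_i a_i$ with $k_i\in K$ and the $a_i\in S\subseteq L$ taken $E$-linearly independent, the tuple $(b,a_1,\dots,a_r)$ from $L$ satisfies the nontrivial $K$-linear relation $b-\sum_i k_i a_i=0$, hence is $E$-linearly dependent by linear disjointness; the coefficient of $b$ in a witnessing $E$-relation is nonzero (otherwise the $a_i$ would be $E$-dependent), so $b\in\spanofover{\{a_1,\dots,a_r\}}{E}\subseteq\spanofover{S}{E}$. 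Thus $g(S)-g(S\cup\{b\})\ge 0$, completing the induction.

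I expect the only substantive point to be this last step: linear disjointness is exactly what compares $\ld_K(b/S)$ with $\ld_E(b/S)$, where both arguments stay inside $L$, whereas the coarser term $\ld_K(b/\xtup S)$ versus $\ld_E(b/\xtup S)$ is controlled by the trivial inclusion $E\subseteq K$ — and the mild subtlety is that these two comparisons point in opposite directions, which is precisely what makes $g(S)-g(S\cup\{b\})$ non-negative. The directed-limit bookkeeping in the reduction is the only other thing to verify, and it is routine.
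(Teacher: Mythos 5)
Your proof is correct and is essentially the paper's argument: both rest on the addition formula for $\ld$ (your ``key identity'', with $b$ in place of a tuple $\ltup$) together with the two comparisons $\ld_K(\ltup/A)=\ld_E(\ltup/A)$ for $\ltup,A$ inside $L$ (linear disjointness) and $\ld_K(\ltup/\xtup A)\le\ld_E(\ltup/\xtup A)$ (since $E\subseteq K$). The paper applies this once to a finite tuple $\ltup$ realising $\ld_K(\xtup/\ltup A)=\ld_K(\xtup/L)$ and $\ld_E(\xtup/\ltup A)=\ld_E(\xtup/L)$, quoting Lemma~\ref{lemma:LDElem}(ii) for the disjointness step, whereas you iterate one element of $L$ at a time and re-derive that consequence of linear disjointness inline, but the mechanism is identical.
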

\begin{proof}
  Let $\ltup \in L$ be a finite tuple such that $\ld_K(\xtup/\ltup A)=\ld_K(\xtup/L)$ and
  $\ld_E(\xtup/\ltup A)=\ld_E(\xtup/L)$.

  Now:
  \begin{align*}
    \ld_K(\xtup/A) - \ld_K(\xtup/\ltup A)
    &= \ld_K(\ltup/A) - \ld_K(\ltup/\xtup A)  &\textrm{(by the addition formula)} \\
    &= \ld_E(\ltup/A) - \ld_K(\ltup/\xtup A)  &\textrm{(by Lemma \ref{lemma:LDElem}(ii))} \\
    &\ge \ld_E(\ltup/A) - \ld_E(\ltup/\xtup A) \\
    &= \ld_E(\xtup/A) - \ld_E(\xtup/\ltup A)  &\textrm{(by the addition formula).} \\
  \end{align*}
\end{proof}

\section{Proofs of the main theorems}

\begin{proof}[Proof of theorem~\ref{thm:PowersSC}]
  By proposition~\ref{fact:genSelfSuff}, for any tuple $\ztup$ from $F$ we have:
\[ \trd(\ztup,\exp(\ztup),\ptup,\exp(\ptup)/C) - \ld_{\Rats}(\ztup,\ptup/C) \ge m.\]
Expanding using the addition formula gives
\begin{multline*}
\trd(\ptup/C) + \trd(\ztup/C,\ptup) + \trd(\exp(\ztup)/C,\ptup,\ztup)\\
    + \trd(\exp(\ptup)/C,\ptup,\ztup,\exp(\ztup)) 
    - \ld_\Rats(\ptup/C,\ztup) - \ld_\Rats(\ztup/C) \ge m.
\end{multline*}
Since $\ptup$ is algebraically independent over $C$, we have $\trd(\ptup/C) = m$, and we deduce
\begin{multline}\label{e1}
 \trd(\ztup/C,\ptup) + \trd(\exp(\ztup)/C,\ptup) + \trd(\exp(\ptup)/C,\exp(\ztup)) \\
    - \ld_\Rats(\ptup/C,\ztup) - \ld_\Rats(\ztup/C) \ge 0.
\end{multline}

We also have:
  \begin{equation}\label{e2}
   \trd(\exp(\ptup)/C,\exp(\ztup)) \le \ld_{\Rats}(\ptup/C,\ztup)
  \end{equation}
because if $\pow_1,\ldots,\pow_t$ form a $\Rats$-linear basis for $\ptup$ over $(C,\ztup)$, then for $i>t$, $\exp(\pow_i)$ is in the algebraic closure of $(C,\exp(\ztup),\exp(\pow_1),\ldots,\exp(\pow_t))$. A similar argument shows
\begin{equation}\label{e3}
\trd(\ztup/C,\ptup) \le \ld_{\Rats(\ptup)}(\ztup/C)
\end{equation}
since if $z_i$ is in the $\Rats(\ptup)$-linear span of $(z_1,\ldots,z_t,C)$ then $z_i$ is in the algebraic closure of $(C,\ptup,z_1,\ldots,z_t)$.

Combining (\ref{e1}) with (\ref{e2}) and (\ref{e3}) gives 
\[\trd(\exp(\ztup)/C,\ptup) + \ld_{\Rats(\ptup)}(\ztup/C) -\ld_\Rats(\ztup/C) \ge 0.\]
By lemma~\ref{lemma:LDElem}(iii), $\Rats(\ptup)$ is linearly disjoint from $C$ over $\Rats$. Also $\ker \subseteq \ecl(\emptyset) \subseteq C$, so, by lemma~\ref{lemma:LDIneq},
\[\trd(\exp(\ztup)/C,\ptup) + \ld_{\Rats(\ptup)}(\ztup/\ker) - \ld_{\Rats}(\ztup/\ker) \ge 0\]
  as required.
\end{proof}

\begin{proof}[Proof of theorem~\ref{thm:ETpower}]
  By theorem~\ref{thm:PowersSC}, taking $\ztup =  (\xtup,\pow\xtup)$,
  \begin{align*}
    \trd(\exp(\xtup),\exp(\pow\xtup)/\pow) & \ge 
    \ld_{\Rats}(\xtup,\pow\xtup/\ker) - \ld_{\Rats(\pow)}(\xtup,\pow\xtup/\ker) \\
    &= \ld_\Rats(\xtup/\ker) + \ld_\Rats(\pow\xtup/\xtup,\ker)
                - \ld_{\Rats(\pow)}(\xtup/\ker) \\
   &= n + \ld_\Rats(\pow\xtup/\xtup,\ker)
                - \ld_{\Rats(\pow)}(\xtup/\ker).
  \end{align*}

Thus it suffices to prove that $\ld_\Rats(\pow\xtup/\xtup,\ker) \ge \ld_{\Rats(\pow)}(\xtup/\ker)$.
Let $\ktup$ be a finite tuple from $\ker$ such that $\ld_\Rats(\pow\xtup/\xtup,\ker) = \ld_\Rats(\pow\xtup/\xtup,\ktup)$ and $\ld_{\Rats(\pow)}(\xtup/\ker) = \ld_{\Rats(\pow)}(\xtup/\ktup)$.

  Let $A_0 := \spanofover{\pow\xtup,\ktup}{\Rats}$. Then $\ld_{\Rats}(\pow\xtup,\ktup/\xtup,\pow^{-1}\ktup) = \ld_{\Rats}(A_0/A_0 \cap \pow^{-1}A_0)$.
  Inductively define $A_{i+1} := A_i \cap \pow^{-1}A_i$ for $i \in \mathbb{N}$. Suppose for some $i$ that $A_{i+1} = A_i$. Then multiplication by $\pow$ induces a $\Rats$-linear automorphism of $A_i$. It follows that for any $f(\pow) \in \Rats[\pow]$, multiplication by $f(\pow)$ is a $\Rats$-linear endomorphism of $A_i$. This endomorphism has trivial kernel because $f(\pow)$ is not a zero divisor of the field (unless $f(\pow) = 0$), and $A_i$ is finite-dimensional, so it is invertible. Its inverse must be multiplication by $f(\pow)^{-1}$, and hence $A_i$ is a $\Rats(\pow)$-vector space. Since $\pow$ is transcendental, $\ld_\Rats \Rats(\pow)$ is infinite, so $A_i = \{0\}$. So $\ld_\Rats A_{i+1} < \ld_\Rats A_i$ unless $A_i = \{0\}$. Thus for some $N \in \mathbb{N}$ we have $A_N = \{0\}$.

For each $i$ we have a chain of subspaces $A_{i+1} \subseteq A_{i+1} + \pow A_{i+1} \subseteq A_i$, so
\begin{align*}
  \ld_\Rats(A_i/A_{i+1}) 
  & =  \ld_\Rats(A_i/A_{i+1} + \pow A_{i+1}) + \ld_\Rats(A_{i+1} + \pow A_{i+1}/A_{i+1}) \\
  & =  \ld_\Rats(A_i/A_{i+1} + \pow A_{i+1}) + \ld_\Rats(\pow A_{i+1}/A_{i+1} \cap \pow A_{i+1}) \\
  & =  \ld_\Rats(A_i/A_{i+1} + \pow A_{i+1}) + \ld_\Rats(\pow A_{i+1}/\pow A_{i+2}) \\
  & =  \ld_\Rats(A_i/A_{i+1} + \pow A_{i+1}) + \ld_\Rats(A_{i+1}/A_{i+2}). \\
\end{align*}
  Thus inductively we obtain
\[    \ld_\Rats(A_0/A_1) = \sum_{i=0}^N \ld_\Rats(A_i/A_{i+1} + \pow A_{i+1}).\]

Now for each $i$, 
\[\ld_\Rats(A_i/A_{i+1} + \pow A_{i+1}) \ge \ld_{\Rats(\pow)}(A_i/A_{i+1} + \pow A_{i+1}) = \ld_{\Rats(\pow)}(A_i/A_{i+1})\]
hence 
\[\ld_\Rats(A_0/A_1) \ge \sum_{i=0}^N \ld_{\Rats(\pow)}(A_i/A_{i+1}) = \ld_{\Rats(\pow)}(A_0)\]
that is,
\begin{equation}\label{e7}
\ld_\Rats(\pow \xtup,\ktup/\xtup,\pow^{-1}\ktup) \ge \ld_{\Rats(\pow)}(\xtup,\pow^{-1}\ktup).
\end{equation}

But 
\begin{equation}\label{e8}
\ld_{\Rats(\pow)}(\xtup,\pow^{-1}\ktup) = \ld_{\Rats(\pow)}(\xtup,\ktup) = \ld_{\Rats(\pow)}(\xtup/\ktup) + \ld_{\Rats(\pow)}(\ktup)
\end{equation}
and
\begin{align}
 \ld_\Rats(\pow \xtup,\ktup/\xtup,\pow^{-1}\ktup) & \le  \ld_\Rats(\pow \xtup,\ktup/\xtup) \notag\\
 & =  \ld_\Rats(\pow \xtup/\ktup,\xtup) + \ld_\Rats(\ktup/\xtup) \notag\\
 & \le  \ld_\Rats(\pow \xtup/\ktup,\xtup) + \ld_\Rats(\ktup) \notag\\
 & =  \ld_\Rats(\pow \xtup/\ktup,\xtup) + \ld_{\Rats(\pow)}(\ktup)  \label{e9}
 \end{align}
the last line holding by lemma~\ref{lemma:LDElem}(ii), since $\Rats(\pow) \linearlydisjoint_\Rats C$ and $\ktup \subseteq \ker \subseteq C$.

Putting together (\ref{e7}), (\ref{e8}), and (\ref{e9}) gives 
$\ld_\Rats(\pow\xtup/\xtup,\ker) \ge \ld_{\Rats(\pow)}(\xtup/\ker)$
as required.
\end{proof}


\end{document}